\documentclass[a4paper,12pt]{amsart}
\usepackage[cm]{fullpage}
\usepackage{hyperref}

%
%

\usepackage{mymacros,oitp}
\usepackage{todonotes}
\includecomment{comment}

%
%

\title{AS-regular algebras from acyclic spherical helices}
\author{Shinnosuke Okawa}
\address{
Department of Mathematics,
Graduate School of Science,
Osaka University,
Machikaneyama 1-1,
Toyonaka,
Osaka,
560-0043,
Japan.
}
\email{okawa@math.sci.osaka-u.ac.jp}

\author{Kazushi Ueda}
\address{
Graduate School of Mathematical Sciences,
The University of Tokyo,
3-8-1 Komaba,
Meguro-ku,
Tokyo,
153-8914,
Japan.}
\email{kazushi@ms.u-tokyo.ac.jp}
\date{}
\pagestyle{plain}

%
%

\begin{document}

%
%

\begin{abstract}
We discuss a
construction
of AS-regular algebras
from acyclic spherical helices,
generalizing works
of Bondal and Polishchuk
\cite{MR1230966}
and Van den Bergh
\cite{MR2836401}.
\end{abstract}

\maketitle


Let $\cC$ be an idempotent-complete pretriangulated dg category
over a field $\bk$.
See e.g.~\cite{MR2275593} and references therein
for basic definitions on dg categories.
For $X,Y \in \Ob(\cC)$,
we write the dg vector space of morphisms
in $\cC$
and its $i$-th cohomology
as $\hom(X,Y)$
and $\Hom^i(X,Y)$ respectively.

A \emph{$\cC$-module} is
a dg functor $\cC^\op \to \Module \bk$
to the dg category of dg vector spaces.
A dg functor $F \colon \cC \to \cC'$
will be identified with its \emph{graph bimodule}
$
\Gamma_F \in \Module (\cC^\op \otimes \cC').
$

Assume that $\cC$ is \emph{proper}
in the sense that
$\hom(X,Y) \in \per \bk$
for any $X, Y \in \Ob(\cC)$.
Assume further that $\cC$ has a Serre functor $\bS$
in the sense of \cite{MR1039961}.
The graph of the Serre functor is given by
\begin{align}
  \hom(Y, \bS(X)) = \hom(X,Y)^\kdual,
\end{align}
where
$
(-)^\kdual \coloneqq \hom_{\bk}(-, \bk)
 \colon (\Module \bk)^\op \to \Module \bk.
$

A prototypical example
of an idempotent-complete proper pretriangulated dg category
admitting a Serre functor
is the category of perfect complexes
on a proper Gorenstein scheme $X$.
The Serre functor is the tensor product
with the canonical bundle
followed by the shift by the dimension of $X$;
$
 \bS(-) = (-) \otimes \omega_X [\dim X].
$

Let $d$ be a natural number.
An object $S$ of $\cC$ is said to be
\emph{spherical of dimension $d$}
if $\bS(S) \simeq S[d]$ and
\begin{align}
 \Hom^i(S,S) \cong
\begin{cases}
 \bfk & i = 0, d, \\
 0 & \text{otherwise}.
\end{cases}
\end{align}
The \emph{spherical twist}
along $S$
is the functorial cone
$T_S$
over the evaluation morphism
$
\ev \colon \hom(S, -) \otimes S \to \id_\cC.
$
It is
an autoequivalence
of the dg category
$\per \cC$
of perfect $\cC$-modules,
whose inverse
is given by the \emph{dual twist}
$T_S^\dual$
defined as the cone
over the coevaluation morphism
$
\coev \colon \id_\cC \to \hom(-,S)^\dual \otimes S
$
shifted by $-1$
\cite{MR1831820}.

Given 
a sequence
$
\bfS = (S_i)_{i=1}^\ell
$
of spherical objects,
let
$
\cB = \cB_\bfS
$
be the full subcategory of $\cC$
consisting of $\{ S_i \}_{i=1}^\ell$.
We identify
$\per \cB$
with
the full idempotent-complete pretriangulated subcategory of $\cC$
generated by $\cB$.
\begin{definition}
A sequence
$
\bfS = (S_i)_{i=1}^\ell
$
of spherical objects
is 
a \emph{spherical collection}
if
the Serre functor of $\cC$ preserves $\per \cB$
and acts as the shift functor there;
$
  \bS_\cC|_{\per \cB} \simeq [d].
$
\end{definition}
The number $\ell$ of objects in the collection
is called the \emph{length} of the collection.


The \emph{directed subcategory}
$\cA = \cA_{\bfS}$
of $\cB$
is the dg category whose set of objects is
$\{ E_i \}_{i=1}^\ell$
and
whose dg vector spaces of morphisms are given by
\begin{align}
 \hom_{\cA}(E_i, E_j) \coloneqq
\begin{cases}
 \hom_{\cB}(S_i, S_j) & i < j,\\
 \bfk \cdot \id_{E_i} & i=j, \\
 0 & i>j.
\end{cases}
\end{align}
The differentials and the compositions of morphisms in $\cA$
are inherited from that of $\cB$
in the obvious way.

Since the collection $\bfS$ is spherical,
the $\cA$-bimodule $\cB/\cA$,
defined as the cone of the natural morphism
$\cA \to \cB$ of $\cA$-bimodules,
is isomorphic to the shift
$
\cA^*[-d]
$
of the graph
$
\cA^* \coloneqq \hom_\bk(\cA, \bk)
$
of the Serre functor.
In other words,
the category $\cB$ is a
\emph{noncommutative anticanonical divisor}
of $\cA$
in the sense of
\cite{MR3727564}.

The sequence
$\lb E_i \rb_{i=1}^\ell$
is a full exceptional collection of $\per \cA$.
The existence of a full exceptional collection
implies that $\per \cA$ is proper and \emph{smooth}
(in the sense that the diagonal bimodule is perfect),
so that $\per \cA$ has a Serre functor.

Let
$
\iota_* \colon \per \cB \to \per \cA
$
be the dg functor
sending a $\cB$-module
to the same module
considered as an $\cA$-module
by the embedding $\cA \hookrightarrow \cB$.
The graph of the left adjoint functor
$
 \iota^* \colon \per \cA \to \per \cB
$
is $\cB$
considered as an $\cA^\op \otimes \cB$-module.
The right adjoint functor
is given by
$
  \iota^!
   = \bS_\cB \circ \iota^* \circ \bS_\cA^{-1}
  \simeq \iota^* \circ \bS_\cA^{-1} [ d ].
$

The \emph{dual twist} $T^\dual_{\iota_*}$
along $\iota_*$ is defined as
the shift by $-1$ of
the cone over the unit
$
\id_\cA \to \iota_* \circ \iota^*
$
of the adjunction.
The graph of the functor $\iota_* \circ \iota^*$
is $\cB$ considered as an $\cA$-bimodule,
so that the graph
of the dual twist
is the $\cA$-bimodule $\cB/\cA[-1]$,
which is the graph of $\bS[-d-1]$.
The \emph{twist} $T_{\iota_*}$ along $\iota_*$,
defined as the cone over the counit
$
\iota_* \circ \iota^! \to \id_\cA
$
of the adjunction,
is right adjoint to
$T^\dual_{\iota_*}$
by \cite[Proposition 5.3]{MR3692883},
and hence is
quasi-isomorphic to $\bS^{-1}[d+1]$.

The \emph{dual cotwist} $C^\dual_{\iota_*}$ along $\iota_*$,
defined as the cone over the counit
$
\iota^* \circ \iota_* \to \id_\cB,
$
is described explicitly as the (iterated) cone of
\begin{multline}
\hom(S_\ell, -)
\otimes \hom(S_{\ell-1}, S_\ell)
\otimes \cdots
\otimes \hom(S_1, S_2)
\otimes S_1
\to
\cdots\\
\to
\bigoplus_{1 \le i_1 < i_2 \le \ell}
\hom(S_{i_2}, -) \otimes \hom(S_{i_1}, S_{i_2}) \otimes S_{i_1}
\to
\bigoplus_{i=1}^\ell
\hom(S_i, -) \otimes S_i
\to
\id_\cB,
\end{multline}
which is quasi-isomorphic to the composition
of spherical twists;
\begin{align}
C^\dual_{\iota_*} \simeq
T_{S_1} \circ T_{S_2} \circ \cdots \circ T_{S_\ell}.
\end{align}
Since both $T_{\iota_*}$ and $C^\dual_{\iota_*}$ are equivalences,
the functor $\iota_*$ is spherical
in the sense of \cite[Theorem 1.1]{MR3692883}.
We expect that
$\iota_*$ has
a \emph{relative Calabi--Yau structure}
in the sense of \cite{MR3911626},
which is a slight strengthening
of an identification
of the spherical twist
with a shift of the inverse Serre functor.

For $1 \le i \le \ell-1$,
the
\emph{right mutation}
of a spherical collection
$
 \bfS = \lb S_j \rb_{j=1}^\ell
$
at position $i$
is defined by
\begin{align}
 R_i (\bfS) \coloneqq
  ( S_1, \ldots, S_{i-1}, S_{i+1}, T_{S_{i+1}}^\dual S_i, S_{i+2}, \ldots, S_\ell ).
\end{align}
The inverse operation is
the \emph{left mutation}
defined by
\begin{align}
 L_i (\bfS) \coloneqq
  \lb S_1, \ldots, S_{i-1}, T_{S_i} S_{i+1}, S_i, S_{i+2}, \ldots, S_\ell \rb.
\end{align}

The \emph{left mutation} $L_i(\cA)$ of $\cA$
at position $i$
is the full subcategory of $\per \cA$
consisting of $E_j$ for $j \ne i+1$
and
\begin{align}
 L_{E_i} E_{i+1} \coloneqq \Cone \lb \hom(E_i, E_{i+1}) \otimes E_i \xto{\ev} E_{i+1} \rb.
\end{align}
%
The passage to the directed subcategory commutes with mutations;
$
\cA_{L_i(\bfS)} \simeq L_i \lb \cA_{\bfS} \rb.
$

The \emph{helix}
generated by an exceptional collection $\lb E_i \rb_{i=1}^\ell$
is the sequence $\lb E_i \rb_{i \in \bZ}$
of exceptional objects of $\per \cA$
satisfying
\begin{align}
 E_{i-\ell} = L_{E_{i-\ell+1}} \circ L_{E_{i-\ell+2}}
  \circ \cdots \circ L_{E_{i-1}}(E_i)[ - d - 1 ]
\end{align}
for any $i \in \bZ$.
The shift is chosen in such a way that
$E_{i-\ell} \cong \bS(E_i)[-d-1]$,
so that
if
$\per \cA \simeq D^b \coh X$
for a smooth projective variety $X$
of dimension $d+1$,
then one has
$E_{i-\ell} \cong E_i \otimes \omega_X$.
The length $\ell$ of the exceptional collection
generating the helix is called the \emph{period} of the helix.
For any $i \in \bZ$,
the exceptional collection $\lb E_j \rb_{j=i}^{i+\ell-1}$
consisting of $\ell$ consecutive objects in the helix
is called a \emph{foundation}
of the helix.
We say that
a helix $\lb E_i \rb_{i \in \bZ}$ is \emph{acyclic}
if
$
 \Hom^{\ne 0}(E_i, E_j) = 0
$
for any $-\infty < i < j < \infty$.

Given an invertible $\cA$-bimodule $\theta$,
the \emph{tensor algebra} over $\cA$ is defined by
\begin{align}
T_\cA(\theta)
&\coloneqq \bigoplus_{i=0}^\infty \theta^{\otimes_\cA i} \\
&= \cA \oplus \theta
\oplus \left( \theta \otimes_\cA \theta  \right)
\oplus \cdots \\
&\simeq \bigoplus_{i=0}^\infty \bigoplus_{j,k=1}^\ell
\hom\left( E_j, \theta^i \left( E_k \right) \right).
\end{align}

In addition to the cohomological grading
coming from that of $\cA$ and $\theta$,
the tensor algebra $T_\cA(\theta)$
has an additional grading
such that $\theta^{\otimes_\cA i}$ has degree $i$.
The positively graded part
$
T_\cA(\theta)^+
\coloneqq \bigoplus_{i=1}^\infty \theta^{\otimes_\cA i}
\simeq \theta \otimes_\cA T_\cA(\theta)
$
is
the kernel
(the cone shifted by $-1$)
of the natural morphism
$
\sigma^\sharp \colon T_\cA(\theta) \to \cA.
$

The category
$\per T_\cA(\theta)$
is a generalization
of the derived category of coherent sheaves
on the total space of a line bundle,
and
the pull-back
$
\sigma_* \colon \per \cA \to \per T_\cA(\theta)
$
along $\sigma^\sharp$
is a generalization
of the push-forward
along the zero section.
Its left adjoint
$
\sigma^* \colon \per T_\cA(\theta) \to \per \cA
$
is given by tensoring $\cA$ over $T_\cA(\theta)$.
Since the graph of
$
  \sigma^* \circ \sigma_*
$
is
\begin{align}
  \Cone \left( \theta \otimes_\cA T_\cA(\theta) \to T_\cA(\theta) \right)
   \otimes_{T_\cA(\theta)} \cA
  \simeq \Cone \left( \theta \to \cA \right),
\end{align}
the dual cotwist along $\sigma_*$
is the autoequivalence $\theta[1]$
of $\per \cA$.
See also \cite{MR3805198}
for a related construction.

The push-forward
$
\pi^* \coloneqq (-) \otimes_\cA T_\cA(\theta)
\colon \per \cA \to \per T_\cA(\theta)
$
along the structure morphism
$
 \pi^\sharp \colon \cA \to T_\cA(\theta)
$
of the tensor algebra
is a generalization of the pull-back
along the projection to the base space.
For any pair $(E,F)$ of objects of $\per \cA$,
one has
\begin{align}
\hom \lb \sigma_* E, \pi^* F \rb
&\simeq
\hom
\lb
 \pi^* E
 \otimes_{T_\cA(\theta)}
 \Cone
 \lb
  \theta \otimes_\cA T_\cA(\theta) \to T_\cA(\theta)
 \rb
 ,
 \pi^* F
\rb
\\
&\simeq
\hom
\lb
 \pi^* E
 ,
 \pi^* F
 \otimes_{T_\cA(\theta)}
 \Cone
 \lb
  T_\cA(\theta) \to \theta^{-1} \otimes_\cA T_\cA(\theta)
 \rb[-1]
\rb
\\
&\simeq
\hom
\lb
 \pi^* E
 ,
 \sigma_* (\theta^{-1}(F))
\rb[-1]
\\
&\simeq
\hom
\lb
 E
 ,
 \theta^{-1}(F)
\rb[-1].
\end{align}

The dg category
$\gr T_\cA(\theta)$
of graded perfect $T_\cA(\theta)$-modules
is a generalization
of the derived category of equivariant coherent sheaves
on the total space of a line bundle
with respect to the $\Gm$-action
by fiberwise dilation.
The dg $\bZ$-algebra
$
 A = \bigoplus_{i,j=-\infty}^\infty A_{ij}
$
defined by
\begin{align}
 A_{i+k\ell,j+m\ell} = \hom(\theta^m(E_j), \theta^k(E_i))
\end{align}
for $1 \le i,j \le \ell$ and $-\infty < m \le k < \infty$
is $\ell$-periodic,
and the $1$-periodic dg $\bZ$-algebra
$
 A' = \bigoplus_{i,j=-\infty}^\infty A'_{ij}
$
defined by
\begin{align}
 A'_{km} = \bigoplus_{i=(k-1)\ell+1}^{k\ell} \bigoplus_{j=(m-1)\ell+1}^{m\ell} A_{ij}
\end{align}
is the dg $\bZ$-algebra
associated to the $\bZ$-graded algebra $T_\cA(\theta)$.
One has quasi-equivalences
\begin{align} \label{eq:gr=mod}
 \gr T_\cA(\theta)
 \simeq  
 \module A'
 \simeq
 \module A
\end{align}
of dg categories.

Let $(F_i)_{i=\ell}^1$ be the exceptional collection in $\per \cA$
right dual to $(E_i)_{i=1}^\ell$
so that $\hom(E_i,F_j) \simeq \bk$ if $i=j$ and $0$ otherwise
\cite[Section 7]{MR992977}.
Then
$E_i$ (resp. $F_i$) represents
the $i$-th projective (resp. simple) $\cA$-module,
and
$\pi^* E_i$ (resp. $\sigma_* F_i$) represents
the $i$-th projective (resp. simple) $T_\cA(\theta)$-module.

The dg algebra $T_\cA(\theta)$
in the case
$
\theta = \bS^{-1}[d+1]
$
is known as
the \emph{derived $(d+1)$-preprojective algebra},
which is a model of the \emph{trivial Calabi--Yau completion}
\cite{MR2795754};
\begin{align}
  \Pi_{d+1}(\cA)
  &\coloneqq T_\cA(\bS^{-1}[d+1]).
\end{align}
It is given explicitly as
\begin{align}
 \bigoplus_{i=0}^\infty \bigoplus_{j,k=1}^\ell
  \hom\left( E_j, \bS^{-i} \left( E_k \right) [i(d+1)] \right)
  \simeq \bigoplus_{j=1}^\ell \bigoplus_{k=1}^\infty
  \hom\left( E_j, E_k \right),
\end{align}
and also known as the \emph{rolled-up helix dg algebra}.
The dg $\bZ$-algebra $A$ in this case will be called
the \emph{helix dg algebra}.

For $1 \le i, j \le \ell$,
one has
\begin{align}
 \hom \lb \sigma_* F_i, \pi^* E_j \rb
 &\simeq
 \hom
 \lb
 F_i
 ,
 \bS(E_j)[-d-1]
 \rb[-1] \\
 &\simeq
 \hom(E_j,F_i)[-d-2] \\
 &\simeq
 \begin{cases}
  \bk[-d-2] & i=j, \\
  0 & \text{otherwise}.
 \end{cases}
 \label{eq:dg_AS-Gorenstein}
\end{align}

An object of $\gr T_\cA(\theta)$ is said to be
a \emph{torsion}
if the grading is bounded.
Let
$\tor T_\cA(\theta)$
be the full subcategory
of $\gr T_\cA(\theta)$
consisting of torsion modules,
and
$
\qgr T_\cA(\theta)
\coloneqq \gr T_\cA(\theta) / \tor T_\cA(\theta)
$
be the dg quotient.

\begin{proposition}
One has an equivalence
$
\qgr T_\cA(\theta) \simeq \per \cA.
$
\end{proposition}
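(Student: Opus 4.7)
The plan is to show that the composition
\[
\Phi \coloneqq q \circ \pi^{*} \colon \per \cA \longrightarrow \gr T_\cA(\theta) \longrightarrow \qgr T_\cA(\theta),
\]
with $q$ the dg quotient functor and $\pi^*$ viewed as valued in graded modules, is the desired equivalence, and to verify fully faithfulness and essential surjectivity separately. The underlying plausibility comes from the observation that the simple graded modules $\sigma_* F_i$ (concentrated in a single grading) are torsion and become zero in $\qgr$, whereas the free modules $\pi^* E_i$ have infinite grading and survive the quotient.

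For fully faithfulness, I would use the standard presentation of morphism spaces in the dg quotient as a colimit over subobjects with torsion quotient,
\[
\hom_{\qgr}(q M, q N) \simeq \varinjlim_{n} \hom_{\gr}(M_{\geq n}, N),
\]
applied to $M = \pi^* E$ and $N = \pi^* F$. The truncation $(\pi^* E)_{\geq n}$ is itself a free graded module, generated in degree $n$ by $E \otimes_\cA \theta^{\otimes n}$; the universal property of free modules together with the invertibility of $\theta$ identifies each term of the colimit with $\hom_\cA(E \otimes \theta^{\otimes n}, F \otimes \theta^{\otimes n}) \simeq \hom_\cA(E, F)$. The transition maps are quasi-isomorphisms, so the colimit is $\hom_\cA(E, F)$ as desired.

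For essential surjectivity, I would argue by generation. The degree-shifted free modules obtained from $\pi^* E_i$ for $1 \le i \le \ell$ generate $\gr T_\cA(\theta)$, and in $\qgr$ each such degree shift becomes isomorphic to $\pi^*(E_i \otimes \theta^{\otimes k})$ for an appropriate integer $k$, since the two modules agree in all but finitely many degrees. The image of $\Phi$ therefore contains a generating set of $\qgr T_\cA(\theta)$, and combined with fully faithfulness this yields the equivalence.

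The main obstacle I anticipate is the careful treatment of the dg quotient: one must justify the colimit formula for morphism spaces in the strict dg sense, and verify that the truncation functor is compatible with the $\theta$-twist in the graded setting. Once these foundational points are in place, the rest is a formal consequence of the adjunction for $\pi^*$ and the invertibility of $\theta$.
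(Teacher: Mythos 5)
Your argument is correct in outline, but it takes a genuinely different route from the paper. The paper never computes morphism spaces in the localization: it builds a semiorthogonal decomposition of $\tor T_\cA(\theta)$ into copies $\cS_i \simeq \per \cA$ of modules concentrated in a single degree, extends this to a semiorthogonal decomposition of all of $\gr T_\cA(\theta)$, and identifies $\qgr T_\cA(\theta)$ with the right orthogonal to $\la \cS_i \ra_{i \ge 0}$ inside the non-negatively graded part, which is generated by the free module $T_\cA(\theta)$ and hence equivalent to $\per \cA$. Your proof is instead the Serre/Artin--Zhang-style argument: full faithfulness of $q \circ \pi^*$ via the colimit $\varinjlim_n \hom_{\gr}((\pi^* E)_{\ge n}, \pi^* F)$, and essential surjectivity via generation by twists of free modules. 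Both proofs ultimately rest on the same key fact, namely that truncations of free modules over the tensor algebra are again free up to a $\theta$-twist ($(\pi^* E)_{\ge n} \simeq \pi^*(\theta^{n}(E))(-n)$); in the paper this appears as the statement that $T_\cA(\theta)(i)$ for $i<0$ lies in the subcategory generated by $T_\cA(\theta)$ and $\la \cS_i \ra_{i\ge 0}$. What the paper's approach buys is that it sidesteps the foundational point you correctly flag as the main obstacle: one never needs the colimit formula for hom-complexes in the dg quotient, because the quotient is realized as an admissible subcategory (a section of $q$), and the equivalence follows from the general theory of semiorthogonal decompositions and dg quotients. What your approach buys is a more explicit, hands-on identification of morphisms that makes the role of the invertibility of $\theta$ (its ``ampleness'') transparent. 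To make your proof airtight you would still need to verify cofinality of the truncation system $\lb (\pi^* E)_{\ge n} \to \pi^* E \rb_n$ among morphisms with torsion cone --- this again uses freeness of the truncations, since $\hom_{\gr}((\pi^* E)_{\ge m}, C) \simeq \hom_\cA(\theta^m(E), C_m) = 0$ for $C$ concentrated in degrees $< m$ --- and invoke Drinfeld's theorem to pass from the Verdier quotient computation to the dg quotient; neither is a gap in substance, but both are genuine steps your sketch defers.
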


\begin{proof}
One has a semiorthogonal decomposition
\begin{align}
  \tor T_\cA(\theta)
  = \la \cS_i \ra_{i \in \bZ},
\end{align}
where $\cS_i \simeq \per \cA$ is the admissible subcategory
consisting of graded modules
concentrated in degree $i$.
One has
\begin{align}
  \gr T_\cA(\theta)
   = \la \ldots, \cS_{-2}, \cS_{-1},
    \lb \gr T_\cA(\theta) \rb_{\ge 0} \ra
\end{align}
just as in \cite[Lemma 14]{MR2641200},
where
$
\lb \gr T_\cA(\theta) \rb_{\ge 0}
$
is the full subcategory of
$
\gr T_\cA(\theta)
$
consisting of non-negatively graded modules.
It follows from the quasi-isomorphism
\begin{align}
  \cA \simeq \Cone(T_\cA(\theta)^+ \to T_\cA(\theta))
\end{align}
of graded $T_\cA(\theta)$-modules
that
the free module
$T_\cA(\theta)$
is right orthogonal to
$
\la \cS_i \ra_{i \ge 0},
$
and that its shift
$
T_\cA(\theta)(i)
$
for any $i < 0$
is in the full pretriangulated subcategory
generated by $T_\cA(\theta)$ and
$
\la \cS_i \ra_{i \ge 0}.
$
Hence 
$T_\cA(\theta)$
generates the right orthogonal to
$
\la \cS_i \ra_{i \ge 0}
$
in
$
\lb \gr T_\cA(\theta) \rb_{\ge 0},
$
so that 
$
\qgr T_\cA(\theta)
$
is equivalent to the full subcategory
$
\la T_\cA(\theta) \ra
$
of $\gr T_\cA(\theta)$,
which in turn is equivalent to $\per \cA$.
\end{proof}

\begin{definition}
A \emph{spherical helix}
generated by a spherical collection $\lb S_i \rb_{i=1}^\ell$
is a sequence $\lb S_i \rb_{i \in \bZ}$
of spherical objects of $\cC$
satisfying
\begin{align}
 S_{i-\ell}
  = T_{S_{i-\ell+1}} \circ T_{S_{i-\ell+2}}
   \circ \cdots \circ T_{S_{i-1}}(S_i)[ - d - 1 ]
\end{align}
for any $i \in \bZ$.
\end{definition}

The length $\ell$ of the spherical collection
generating the spherical helix is called
the \emph{period} of the spherical helix.
For any $i \in \bZ$,
the spherical collection $\lb S_j \rb_{j=i}^{i+\ell-1}$
consisting of $\ell$ consecutive objects in the spherical helix
is called a \emph{foundation}
of the spherical helix $\cH$.

\begin{definition}
A spherical helix
$\lb S_i \rb_{i \in \bZ}$
is \emph{acyclic}
if
$
 \Hom^{\ne 0}(S_i, S_j) = 0
$
for any $-\infty < i < j < \infty$.
\end{definition}
One has
\begin{align}
\hom(S_i, S_j)
&\simeq \hom(\iota^* E_i, \iota^* E_j) \\
&\simeq \hom(E_i, \iota_* \iota^* E_j) \\
&\simeq \hom \left( E_i,
 \Cone \left( E_{j-\ell} \to E_j \right) \right),
\end{align}
which immediately yields the following:

\begin{theorem}
If the spherical helix
$\left( S_i \right)_{i \in \bZ}$
is acyclic,
then
the helix
$\left( E_i \right)_{i \in \bZ}$
is acyclic.
\end{theorem}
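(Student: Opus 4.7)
The plan is to pass from acyclicity of the spherical helix to acyclicity of the exceptional helix by applying $\hom(E_i,-)$ to the distinguished triangle
\[
E_{j-\ell} \to E_j \to \Cone(E_{j-\ell} \to E_j) \to E_{j-\ell}[1]
\]
in $\per \cA$ and exploiting the identification $\Hom^k(E_i, \Cone(E_{j-\ell} \to E_j)) \simeq \Hom^k(S_i, S_j)$ recorded in the display just above the theorem. Under the acyclicity hypothesis, the rightmost term vanishes for every $k \ne 0$ and every $i < j$, so it suffices to show $\Hom^{\ne 0}(E_i, E_{j-\ell}) = 0$; the associated long exact sequence will then force $\Hom^{\ne 0}(E_i, E_j) = 0$ by exactness.

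I would verify this intermediate vanishing by induction on the gap $j - i \ge 1$. When $0 < j - i < \ell$, both $E_i$ and $E_{j-\ell}$ belong to the foundation $(E_{j-\ell}, E_{j-\ell+1}, \ldots, E_{j-1})$ with $E_{j-\ell}$ strictly preceding $E_i$, so semiorthogonality of this full exceptional collection kills $\hom(E_i, E_{j-\ell})$ in every degree. When $j - i = \ell$ one has $E_{j-\ell} = E_i$, and the desired vanishing follows from exceptionality of $E_i$. When $j - i > \ell$, the inductive hypothesis applied to the strictly smaller positive gap $(j - i) - \ell$ yields $\Hom^{\ne 0}(E_i, E_{j-\ell}) = 0$.

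The point that requires slightly more than a routine citation is the uniform validity of the identification $\hom(S_i, S_j) \simeq \hom(E_i, \Cone(E_{j-\ell} \to E_j))$ for all pairs $i < j$ in $\bZ$, and not only for indices inside the base foundation $[1, \ell]$. I would address this by checking that the functor $\iota^*$ intertwines the left mutation $L_{E_k}$ in $\per \cA$ with the spherical twist $T_{S_k}$ in $\per \cB$, which is immediate from the defining cone descriptions of both together with the equality $\hom_\cA(E_k, E_{k+1}) = \hom_\cB(S_k, S_{k+1})$ built into the directed subcategory. An induction on the distance to the base foundation then gives $\iota^* E_i \simeq S_i$ for every $i \in \bZ$, after which the derivation displayed just before the theorem applies verbatim. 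Once this compatibility is in place, the long exact sequence argument above completes the proof.
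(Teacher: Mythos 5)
Your proof is correct and takes essentially the same route as the paper, whose entire argument consists of the displayed adjunction isomorphism $\hom(S_i,S_j)\simeq\hom(E_i,\Cone(E_{j-\ell}\to E_j))$ followed by the assertion that the theorem follows immediately; your long exact sequence together with the induction on the gap $j-i$ is precisely the omitted ``immediate'' step. Your verification that $\iota^*E_i\simeq S_i$ for all $i\in\bZ$ supplies a detail the paper leaves implicit, though note that for the iterated mutations the identification $\hom(E_k,F)\simeq\hom(S_k,\iota^*F)$ of the mutating hom-spaces is not purely definitional (since $\iota^*$ is not fully faithful) but uses the semiorthogonality $\hom(F,E_k)=0$ for the partially mutated objects $F$ lying in the subcategory generated by the later members of the foundation.
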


Recall that a connected $\bZ$-algebra
$
A=\bigoplus_{i,j=-\infty}^\infty A_{ij}
$
over $\bfk$
is said to be
\emph{AS-Gorenstein}
if
\begin{align}
  \sum_{j,k=-\infty}^\infty
  \dim \Hom^k(S_{i,A}, P_{j,A})
  =1
\end{align}
for any $i \in \bZ$,
where $S_{i,A}$ and $P_{i,A}$ are
the $i$-th simple and projective $A$-modules respectively.
It follows from \pref{eq:dg_AS-Gorenstein}
that the acyclicity of the helix $(E_i)_{i \in \bZ}$
implies the AS-Gorenstein property
of the helix $\bZ$-algebra $A$.
Since $\cA$ is smooth and $\theta$ is perfect,
the tensor algebra $T_\cA(\theta)$ is smooth,
so that $A$ has finite global dimension
(see e.g.~\cite[Theorem 4.8]{MR2795754}).
The Hilbert polynoimal of $A$ is determined
by the projective resolutions of the simples.


\emph{Acknowledgement}:
S.~O.~was partially supported by
JSPS Grant-in-Aid for Scientific Research
No.16H05994, 16H02141, 16H06337, 18H01120, 20H01797. 
K.~U.~was partially supported by
JSPS Grant-in-Aid for Scientific Research
No.16H03930.

\bibliographystyle{amsalpha}
\bibliography{bibs}

\def\cprime{$'$} \def\cprime{$'$}
\providecommand{\bysame}{\leavevmode\hbox to3em{\hrulefill}\thinspace}
\providecommand{\MR}{\relax\ifhmode\unskip\space\fi MR }
\providecommand{\MRhref}[2]{%
  \href{http://www.ams.org/mathscinet-getitem?mr=#1}{#2}
}
\providecommand{\href}[2]{#2}
\begin{thebibliography}{VdB11}

\bibitem[AL17]{MR3692883}
Rina Anno and Timothy Logvinenko, \emph{Spherical {DG}-functors}, J. Eur. Math.
  Soc. (JEMS) \textbf{19} (2017), no.~9, 2577--2656. \MR{3692883}

\bibitem[BD19]{MR3911626}
Christopher Brav and Tobias Dyckerhoff, \emph{Relative {C}alabi-{Y}au
  structures}, Compos. Math. \textbf{155} (2019), no.~2, 372--412. \MR{3911626}

\bibitem[BK89]{MR1039961}
A.~I. Bondal and M.~M. Kapranov, \emph{Representable functors, {S}erre
  functors, and reconstructions}, Izv. Akad. Nauk SSSR Ser. Mat. \textbf{53}
  (1989), no.~6, 1183--1205, 1337. \MR{MR1039961 (91b:14013)}

\bibitem[Bon89]{MR992977}
A.~I. Bondal, \emph{Representations of associative algebras and coherent
  sheaves}, Izv. Akad. Nauk SSSR Ser. Mat. \textbf{53} (1989), no.~1, 25--44.
  \MR{MR992977 (90i:14017)}

\bibitem[BP93]{MR1230966}
A.~I. Bondal and A.~E. Polishchuk, \emph{Homological properties of associative
  algebras: the method of helices}, Izv. Ross. Akad. Nauk Ser. Mat. \textbf{57}
  (1993), no.~2, 3--50. \MR{1230966 (94m:16011)}

\bibitem[Kel06]{MR2275593}
Bernhard Keller, \emph{On differential graded categories}, International
  {C}ongress of {M}athematicians. {V}ol. {II}, Eur. Math. Soc., Z\"urich, 2006,
  pp.~151--190. \MR{MR2275593 (2008g:18015)}

\bibitem[Kel11]{MR2795754}
\bysame, \emph{Deformed {C}alabi-{Y}au completions}, J. Reine Angew. Math.
  \textbf{654} (2011), 125--180, With an appendix by Michel Van den Bergh.
  \MR{2795754}

\bibitem[Orl09]{MR2641200}
Dmitri Orlov, \emph{Derived categories of coherent sheaves and triangulated
  categories of singularities}, Algebra, arithmetic, and geometry: in honor of
  {Y}u. {I}. {M}anin. {V}ol. {II}, Progr. Math., vol. 270, Birkh\"auser Boston
  Inc., Boston, MA, 2009, pp.~503--531. \MR{2641200 (2011c:14050)}

\bibitem[Seg18]{MR3805198}
Ed~Segal, \emph{All autoequivalences are spherical twists}, Int. Math. Res.
  Not. IMRN (2018), no.~10, 3137--3154. \MR{3805198}

\bibitem[Sei17]{MR3727564}
Paul Seidel, \emph{Fukaya {$A_\infty$}-structures associated to {L}efschetz
  fibrations. {II}}, Algebra, geometry, and physics in the 21st century, Progr.
  Math., vol. 324, Birkh\"{a}user/Springer, Cham, 2017, pp.~295--364.
  \MR{3727564}

\bibitem[ST01]{MR1831820}
Paul Seidel and Richard Thomas, \emph{Braid group actions on derived categories
  of coherent sheaves}, Duke Math. J. \textbf{108} (2001), no.~1, 37--108.
  \MR{MR1831820 (2002e:14030)}

\bibitem[VdB11]{MR2836401}
Michel Van~den Bergh, \emph{Noncommutative quadrics}, International Mathematics
  Research Notices. IMRN (2011), no.~17, 3983--4026. \MR{MR2836401
  (2012m:14004)}

\end{thebibliography}

\end{document}